\newcommand{\eTRS}{\rm eTRS }
\newcommand{\TRS}{\rm TRS\ }
\newcommand{\SDPp}{\rm SDP}
\newcommand{\LNGM}{\rm LNGM\ }
\newtheorem{theorem}{Theorem}[section]
\newtheorem{lemma}{Lemma}[section]
\newtheorem{proposition}{Proposition}[section]
\newtheorem{corollary}{Corollary}[section]
\newtheorem{example}{Example}[section]
\title{On SOCP/SDP formulation of the extended trust region subproblem}
\author{S. Fallahi \footnote{Department of Mathematics, Salman Farsi University of Kazerun, Kazerun, Iran, Email: saeedfallahi808@gmail.com}
, M. Salahi\footnote{Corresponding Author.}$^,$ \footnote{Department of Applied Mathematics, Faculty of Mathematical Sciences, University of Guilan, Rasht, Iran, Email: salahim@guilan.ac.ir}
, S. Ansary  Karbasy\footnote{Department of Applied Mathematics, Faculty of Mathematical Sciences, University of Guilan, Rasht, Iran, Email:saeidansary144@yahoo.com}}
\date{}
\begin{document}
\maketitle
\thispagestyle{fancy}
\fancyhead{}
\renewcommand{\headrulewidth}{0pt}
\begin{abstract}
\noindent
In this paper, we consider the extended trust region subproblem
(\eTRS)  which is the minimization of an indefinite quadratic function subject to the intersection of unit ball  with a single linear
inequality constraint.  Using  a variation of S-Lemma, we derive the
necessary and sufficient optimality conditions for \eTRS.
Then  an SOCP/SDP formulation is introduced for the problem. Finally, several illustrative examples are provided.

\end{abstract}

\textit{{\small \textbf{Keywords}: Extended trust region subproblem, S-lemma, Semidefinite program, Second order cone program.}}

{\scriptsize Manuscript was received on --/--/----, revised on --/--/---- and accepted for publication on --/--/----.}

\section{Introduction}\label{intro}

Consider the following { extended trust region subproblem \eTRS}
\begin{eqnarray}\min && x^{T} A x+2a^{T} x\nonumber\\
		   &&\|x\|^{2}\leq  1  \label{etrs1}\\
&& b^{T}x\leq \beta \nonumber
\end{eqnarray} where ${A}^{T} =A \in  \mathbb{R}^{n\times n}$ is indefinite, $a, b \in
\mathbb{R}^n$ and  $\beta\in \mathbb{R}.$ Since $A$ is indefinite,  it is a nonconvex optimization problem and semidefinite programming (SDP) relaxation is not tight  in general.
When $b=0$ and $\beta=0$, then \eTRS reduces to the well-known trust region subproblem (\TRS) which is the key subproblem in solving nonlinear optimization problems \cite{con}.
Although \TRS is a nonconvex problem, it enjoys strong duality and exact SDP relaxation exists for it \cite{fw}.
However, the following classical SDP relaxation is not exact for \eTRS as it will be shown also in the numerical results section:
\begin{align}\label{sdpr}
\min ~~ &A\bullet X + 2a^Tx \nonumber \\
            &  \mathrm{trace}(X)  \leq 1, \nonumber   \\
            & b^Tx\leq \beta,  \\
            &X \succeq xx^T. \nonumber
\end{align}

First the authors in \cite{sz} have studied \eTRS and proposed and exact SOCP/SDP\footnote{Second order cone program/Semidefinite program} formulation for it.
Due to the importance of \eTRS also in solving  general nonlinear optimization problems, several variants  of it have been the focus of  current research  \cite{bec,ba,by,hs,jl,sf}.
Beck and Eldar have studied \eTRS
under the condition that $\dim(Ker(A-\lambda_1 I))\geq 2$
which is equivalent to
\begin{equation}
	\label{eq:twosmal}
	 \lambda_1=\lambda_2,
\end{equation}
where $\lambda_1$ and $\lambda_2$ are the two smallest eigenvalues of $A$.
Under this condition, they have
shown that the following optimality conditions are necessary and sufficient
optimality conditions for \eTRS:
\[
	\begin{array}{cll}
		({\it i}) &2(A+\lambda I)x = -(2a + \mu b), \\
	({\it ii})&(A+ \lambda I) \succeq 0, \\
	({\it iii}) & \lambda(\|x\|^{2}- 1) = 0, \quad \mu (b^{T}x-
	\beta)=0,\\
	({\it iv}) & \lambda, \mu \geq 0.
	 \end{array}
\]
Jeyakumar and Li in \cite{jl} have
shown that ${\rm dim ( \ Ker} (A-\lambda_1I_n))\ge 2$, together with the Slater
condition ensures that a set of combined first and second-order
Lagrange multiplier conditions are necessary and sufficient for the
global optimality of \eTRS and consequently for strong duality.
In \cite{hs} the authors have improved the dimension condition by
Jeyakumar and Li under which \eTRS admits an exact
semidefinite   relaxation.
They proposed the following condition
\index{\SDPp}
\begin{equation}
\label{eq:rankcond}
{\rm rank~} ([A- \lambda_1I_{n}~~ b ] ) \leq n-1.
\end{equation}

It should be noted that \TRS has at most one local-nonglobal minimum (\LNGM) \cite{mart}, which is a candidate for the optimal solution of \eTRS if it is feasible. An efficient algorithm for computing \LNGM is given in \cite{stw}. All the above  rank conditions guarantee that the global solution of \eTRS does not happen at the \LNGM of \TRS. Most recently  in \cite{ba}  the authors have studied \eTRS and derived the SOCP/SDP reformulation given in \cite{sz} by different approach and extended it the cases where more than one linear constraint exist. In this paper, using a variation of S-Lemma, first we derive the necessary and sufficient optimality conditions for \eTRS which leads us to an SOCP/SDP formulation of it. Then we prove that our derived formulation   is the dual of the formulation presented in \cite{ba,sz}.  Finally, we present several numerical examples illustrating various cases that may happen for the optimal solution of \eTRS.
\section{Global Optimality Conditions for \eTRS}
\label{sect:newglobal}
We define the {dual cone of $S$} as $S^*= \{y: \langle
y,x\rangle \geq 0, \forall x \in S\}$. The following proposition which is variant of S-Lemma, plays a key role  in the proof of the
optimality conditions.
\begin{proposition}
	\label{prop:Slemma}
 Let $f, g : \mathbb{R}^n \longrightarrow  \mathbb{R}$  be
quadratic functions, $g(x)=x^{T}A_{g}x + {a_{g}}^{T}x + c_{g}$, and let
$b \in \mathbb{R}^n$, and
$\beta \in \mathbb{R}$. Moreover, assume that $g(x)$ is convex and there
exists an $\bar{x} \in \mathbb{R}^n$ such that $b^{T}\bar{x} < \beta$
and $g(\bar{x}) < 0$. Then the  following two statements are equivalent:
\begin{enumerate}
\item The system
 \begin{eqnarray}&& f(x) < 0,\nonumber\\
&&g(x) \leq 0,\nonumber\\
&& b^{T}x \leq \beta ,\nonumber\\
&& ~~x \in \mathbb{R}^n \nonumber
\end{eqnarray}
is not solvable.
\item  There is a nonnegative multiplier $y \geq  0$, a scalar
	$u_0\in \mathbb{R}$ and vector $u \in \mathbb{R}^n$ such that
\[
	\begin{array}{c}
 f(x) + y g(x) + (u^{T}x-u_{0})(b^{T}x - \beta)\geq  0, ~~ \forall x \in \mathbb{R}^n,\\
 u\in  \{ x\in \mathbb{R}^n:  x^{T}A_{g}x \leq 0, {a_{g}}^{T}x \leq 0\}^*,\\
 \begin{pmatrix}
	 u_0 \cr u
 \end{pmatrix}
 \in  \left\{
 \begin{pmatrix}
	 x_0 \cr x
 \end{pmatrix} :   x_0=-1,
	  g(x) \leq 0, c_{g}+{a_{g}}^{T}x \leq 0 \right\}^*.
\end{array}
\]
\end{enumerate}
\end{proposition}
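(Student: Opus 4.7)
The equivalence splits naturally into an easy direction and a hard direction.

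For $(2) \Rightarrow (1)$, I would argue by contradiction. Suppose $x$ were a solution of the system in part (1). Since $g$ is convex with $A_g \succeq 0$, the inequality $g(x) \leq 0$ yields $c_g + a_g^T x \leq -x^T A_g x \leq 0$, so the pair $(-1, x)$ belongs to the set whose dual cone $(u_0, u)$ is required to lie in; evaluating the dual-cone defining inequality at $(-1, x)$ therefore gives $u^T x - u_0 \geq 0$. Combined with $b^T x - \beta \leq 0$, this forces $(u^T x - u_0)(b^T x - \beta) \leq 0$, and together with $y g(x) \leq 0$ (using $y \geq 0$ and $g(x) \leq 0$) the polynomial inequality displayed in part (2) collapses to $f(x) \geq 0$, contradicting $f(x) < 0$.

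For the hard direction $(1) \Rightarrow (2)$, I plan to use a separation-cum-duality argument exploiting the Slater condition. Unsolvability is equivalent to $f \geq 0$ on $F = \{x : g(x) \leq 0\} \cap \{x : b^T x \leq \beta\}$, and the Slater point $\bar{x}$ supplies interior-point regularity for $F$. The idea is to slice $F$ by the affine hyperplanes $b^T x = t$, apply the classical convex-quadratic S-lemma to the pair $(f, g)$ on each slice to obtain a parametric multiplier $y(t) \geq 0$, and then repackage the resulting family into a single rank-one quadratic term of the required product shape $(u^T x - u_0)(b^T x - \beta)$. The two dual-cone conditions on $(u_0, u)$ are precisely what guarantees $u^T x - u_0 \geq 0$ on $\{g \leq 0\}$: the first encodes nonnegativity on the recession cone of $\{g \leq 0\}$, which under $A_g \succeq 0$ coincides with $\{v : v^T A_g v \leq 0,\; a_g^T v \leq 0\}$, while the second encodes nonnegativity on the sublevel set itself via the homogenization $x_0 = -1$.

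The main obstacle is this packaging step: producing a single rank-one quadratic multiplier $(u^T x - u_0)(b^T x - \beta)$ out of the parametric Lagrange data, while simultaneously certifying that $(u_0, u)$ lies in the prescribed dual cones. I expect this to come from separating an appropriate closed convex cone, for instance the one generated by $\{(g(x), b^T x - \beta, f(x)) : x \in \mathbb{R}^n\}$ in $\mathbb{R}^3$, with the Slater assumption ruling out improper separators. A case analysis on whether $\inf\{f(x) : x \in F\}$ is attained in the interior or on the boundary $\{b^T x = \beta\}$ will probably be needed, and the boundary case is precisely where a purely linear multiplier for $b^T x \leq \beta$ fails to dominate the indefinite part of $f$; this is what forces the genuinely rank-one quadratic multiplier appearing in the statement.
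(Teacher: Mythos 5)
The paper offers no proof of this proposition: it simply cites Corollary 7 of Sturm and Zhang \cite{sz}, where the result is derived from their theory of cones of nonnegative quadratic functions via a matrix rank-one decomposition (essentially the same decomposition the paper later restates as Lemma \ref{teo22}). Your direction $(2)\Rightarrow(1)$ is correct and complete: the observation that $g(x)\le 0$ together with $A_g\succeq 0$ forces $c_g+a_g^Tx\le -x^TA_gx\le 0$, so that $(-1,x)$ lies in the set whose dual cone contains $(u_0,u)$ and hence $u^Tx-u_0\ge 0$, is exactly what makes the product term nonpositive at a putative solution and collapses the certificate to $f(x)\ge 0$.

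The direction $(1)\Rightarrow(2)$, however, is a plan rather than a proof, and the step you yourself flag as the main obstacle --- assembling the slice-wise data into a single $y\ge 0$ and a single rank-one term $(u^Tx-u_0)(b^Tx-\beta)$ with the two dual-cone memberships --- is the entire content of the theorem. Two concrete problems. First, applying the classical S-lemma on each slice $b^Tx=t$ produces a multiplier $y(t)$ that is neither unique nor guaranteed to vary in any controlled way with $t$, whereas statement (2) demands one fixed $y$; no mechanism is given for reconciling the family $\{y(t)\}$ with a single $y$ plus a product correction term, and this is where the argument would actually have to do work. Second, the fallback of separating the cone generated by $\{(g(x),\,b^Tx-\beta,\,f(x))\}$ in $\mathbb{R}^3$ founders on nonconvexity: joint images of three quadratic/affine maps are not convex in general, which is precisely why the naive S-lemma extension with a scalar multiplier for the linear constraint fails and why the relaxation (\ref{sdpr}) is not exact. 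The proof in \cite{sz} avoids both issues by working at the matrix level, characterizing the cone of quadratics nonnegative on $\{g\le 0\}\cap\{b^Tx\le\beta\}$ through a rank-one decomposition of feasible moment matrices compatible with a second-order-cone constraint; your write-up would need either to import that machinery or to supply a genuinely different construction of $(u_0,u)$. As it stands, the hard direction is unproven.
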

\begin{proof}
	See \cite[Corollary 7]{sz}.
\end{proof}

\begin{corollary}
When $g(x)=\|x\|^2-1$, then the second item in  Proposition 1 is equivalent to
\begin{eqnarray}&& f(x) + y g(x) + (u^{T}x-u_{0})(b^{T}x - \beta)\geq  0, ~~ \forall x \in \mathbb{R}^n,\nonumber\\
 &&\begin{pmatrix}
	 -u_0 \cr u
 \end{pmatrix}
 \in  L_{n+1},\nonumber
\end{eqnarray}
where $L_{n+1}$ is the {Lorentz cone} defined as follows:
\[L_{n+1}=\{x=(x_0;\bar x)\in \mathbb{R}^{n+1}|\ \ \|\bar x\|\le x_0\}.\]
\end{corollary}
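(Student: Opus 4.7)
The plan is to verify the corollary by a direct computation: substitute $g(x)=\|x\|^{2}-1$ into the two dual-cone membership conditions appearing in the second item of Proposition~\ref{prop:Slemma} and simplify. For this choice we have $A_{g}=I$, $a_{g}=0$, and $c_{g}=-1$, so the quadratic inequality $f(x)+yg(x)+(u^{T}x-u_{0})(b^{T}x-\beta)\ge 0$ is unchanged; only the two cone conditions need to be rewritten.

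First I would dispose of the condition $u\in\{x:x^{T}A_{g}x\le 0,\ a_{g}^{T}x\le 0\}^{*}$. With $A_{g}=I$ and $a_{g}=0$, the set $\{x:\|x\|^{2}\le 0,\ 0\le 0\}$ collapses to $\{0\}$, whose dual cone is all of $\mathbb{R}^{n}$. Hence this condition is automatically satisfied and can simply be dropped from the list.

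Next I would compute the second cone. Because $c_{g}+a_{g}^{T}x=-1\le 0$ is trivially true, the set
\[
S=\left\{\begin{pmatrix}x_{0}\\ x\end{pmatrix}: x_{0}=-1,\ \|x\|^{2}\le 1,\ c_{g}+a_{g}^{T}x\le 0\right\}
\]
equals $\{(-1;x):\|x\|\le 1\}$. Membership $(u_{0};u)\in S^{*}$ means $-u_{0}+u^{T}x\ge 0$ for all $\|x\|\le 1$, i.e.\ $-u_{0}\ge -\min_{\|x\|\le 1}u^{T}x=\|u\|$. Therefore $\|u\|\le -u_{0}$, which is exactly $(-u_{0};u)\in L_{n+1}$.

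There is no real obstacle here; the only point deserving care is the sign convention in the Lorentz-cone coordinate (note that the apex variable that must dominate $\|u\|$ is $-u_{0}$, not $u_{0}$), which is precisely why the corollary states the conclusion in the form $(-u_{0};u)\in L_{n+1}$. Combining these observations with the unchanged quadratic inequality yields the claimed equivalent formulation.
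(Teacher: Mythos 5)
Your computation is correct and supplies exactly the routine specialization that the paper omits (the corollary is stated without proof): with $A_g=I$, $a_g=0$, $c_g=-1$ the first dual-cone condition collapses to $u\in\{0\}^*=\mathbb{R}^n$ and the second reduces, via $\min_{\|x\|\le 1}u^Tx=-\|u\|$, to $\|u\|\le -u_0$, i.e.\ $(-u_0;u)\in L_{n+1}$. Nothing is missing.
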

In the following theorem, we give the  optimality conditions for
\eTRS.  Our proofs follows the idea in \cite{jl}.
\begin{theorem}\label{thm1}
	Suppose that the { strict feasibility constraint
	} holds for \eTRS:
\[
	\exists \hat{x} \in \mathbb{R}^n  \text{  with  }
	\|\hat{x}\|^{2}- 1<0, \ b^{T}\hat{x} -\beta <0.
\]
Moreover, let $x^{*}$ be a feasible point for \eTRS. Then $x^{*}$ is a global
minimizer of \eTRS if, and only if, there exist $\lambda_{0} \in \mathbb{R}_{+}$ and $(-u_{0}, u) \in L^{n+1}$ such that the following conditions hold:\\
\begin{itemize}
	\item (i)  $(2A+2\lambda_{0}I + bu^{T}+ ub^{T})x^{*} = -(2a - \beta u -bu_{0}),$ 
	\item (ii)  $\lambda_{0}(\|x^{*}\|^{2}- 1) = 0, (u^{T}x^{*}- u_{0})(b^{T}x^{*}- \beta)=0,$ 
	\item (iii) $(2A+2\lambda_{0}I + bu^{T}+ ub^{T}) \succeq 0$. 
\end{itemize}
\end{theorem}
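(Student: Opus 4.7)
The plan is to apply the Corollary to Proposition 1 to the infeasibility of the strict inequality system that characterizes global optimality. Set $v^{*} := (x^{*})^{T} A x^{*} + 2 a^{T} x^{*}$ and define $f(x) := x^{T} A x + 2 a^{T} x - v^{*}$, $g(x) := \|x\|^{2} - 1$. Then $x^{*}$ is a global minimizer for \eTRS if and only if the system $f(x) < 0,\ g(x)\le 0,\ b^{T} x \le \beta$ has no solution. The strict feasibility hypothesis is exactly the Slater-type premise of Proposition 1 (take $\bar{x} = \hat{x}$), so the Corollary produces $\lambda_{0}\ge 0$ and $(-u_{0}, u) \in L^{n+1}$ such that
\[
  h(x) := f(x) + \lambda_{0} g(x) + (u^{T} x - u_{0})(b^{T} x - \beta) \ge 0 \quad \forall x \in \mathbb{R}^{n}.
\]

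Next I would expand $h$ and read off (iii): collecting the quadratic part shows that the Hessian of $h$ is exactly $2A + 2\lambda_{0} I + b u^{T} + u b^{T}$, and nonnegativity of $h$ everywhere forces this matrix to be positive semidefinite. To extract (i) and (ii), I would evaluate $h$ at $x^{*}$. The first summand vanishes by construction; the second is nonpositive since $\lambda_{0} \ge 0$ and $g(x^{*}) \le 0$; the third is nonpositive since $b^{T} x^{*} - \beta \le 0$ by feasibility, while $u^{T} x^{*} - u_{0} \ge 0$ follows from the Lorentz-cone inclusion via $|u^{T} x^{*}| \le \|u\|\,\|x^{*}\| \le \|u\| \le -u_{0}$. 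Hence $h(x^{*}) \ge 0$ is a sum of zero and two nonpositive terms, forcing each to vanish; this gives the complementarity relations in (ii). Because $h$ is a convex quadratic attaining its global minimum $0$ at $x^{*}$, the first-order condition $\nabla h(x^{*}) = 0$ holds, and a direct computation of $\nabla h$ rearranges to (i).

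For sufficiency, I would reverse the flow: assuming (i)--(iii), the same quadratic $h$ is convex by (iii), has a critical point at $x^{*}$ by (i), and equals $0$ at $x^{*}$ by (ii); thus $h(x) \ge 0$ for every $x \in \mathbb{R}^{n}$. For any $x$ feasible for \eTRS, the last two summands of $h(x)$ are again nonpositive (the Lorentz-cone sign analysis applies because $\|x\| \le 1$), so $f(x) \ge 0$, which is precisely $x^{T} A x + 2 a^{T} x \ge v^{*}$.

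The step I expect to be the delicate one is the sign control of the cross-term $(u^{T} x - u_{0})(b^{T} x - \beta)$, which requires transferring the Lorentz-cone condition $(-u_{0}, u) \in L^{n+1}$ into the pointwise inequality $u^{T} x - u_{0} \ge 0$ on the unit ball; this is used both to close the complementarity argument in the necessity direction and to guarantee weak duality in the sufficiency direction. Everything else amounts to a routine application of Proposition 1, the convex-quadratic extremum principle, and bookkeeping of the Hessian and gradient of $h$.
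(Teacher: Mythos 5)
Your proposal is correct and follows essentially the same route as the paper: apply Proposition~\ref{prop:Slemma} (via its Lorentz-cone corollary) to the infeasible strict system at the optimal value, evaluate the resulting nonnegative quadratic $h$ at $x^*$ to force the complementarity relations, and read off (i) and (iii) from the first- and second-order conditions for $h$ at its global minimizer; the sufficiency direction is likewise the same weak-duality chain. The only cosmetic difference is that you justify (iii) from global nonnegativity of $h$ rather than from $\nabla^2 h(x^*)\succeq 0$, which for a quadratic is the same fact.
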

\begin{proof}
	$[${\bf Necessity}$]$ Let $x^{*}$ be a global minimizer of
	\eTRS. Then the
following system of inequalities has no solution:
 \begin{eqnarray}&& x^{T} A x+{2a}^{T} x+\gamma <0,\nonumber\\
&&\hspace{9mm} \| x \|^{2}- 1 \leq 0,\nonumber\\
&& \hspace{16mm} b^{T}x \leq \beta,\nonumber
\end{eqnarray}
where $\gamma=-({x^{*}}^{T} A x^{*}+{2a}^{T} x^{*})$.
Thus by Proposition \ref{prop:Slemma} there exist $\lambda_{0} \geq  0$ and a vector $(u_0; u) \in \mathbb{R}^{n+1}$ such
that
\[x^{T} A x+{2a}^{T} x +\gamma + \lambda_{0}(\| x \|^{2}- 1) + (u^{T}x-u_{0})(b^{T}x - \beta)\geq  0, ~~ \forall x \in \mathbb{R}^n\]
and $u^{T}x -u_{0}\geq 0, ~~ \forall x \in \mathbb{R}^n: \| x \|^{2} \leq \delta^2.$ Let $x=x^{*}$, then we have
\[\lambda_{0}(\| x^{*} \|^{2}- 1) + (u^{T}x^{*}-u_{0})(b^{T}x^{*} - \beta)\geq  0.\]
Now as $x^{*}$ is feasible for \eTRS and $\lambda_{0}\geq 0,\ (u^{T}x^{*}-u_{0})\geq 0$, it follows that
\[\lambda_{0}(\| x^{*} \|^{2}- 1)=0, \qquad (u^{T}x^{*}-u_{0})(b^{T}x^{*} - \beta)=  0.\]
Let
\[h(x)=x^{T} A x+{2a}^{T} x  + \lambda_{0}(\| x \|^{2}- 1) + (u^{T}x-u_{0})(b^{T}x - \beta),\]
then obviously $x^{*}$ is a global minimizer of $h$, and so $\nabla h(x^{*})=0$ and $\nabla^{2} h(x^{*})\succeq 0$ i.e.,
\[(2A+2\lambda_{0}I + bu^{T}+ ub^{T})x^{*} = -(2a - \beta u -bu_{0}),\]
\[ (2A+2\lambda_{0}I + bu^{T}+ ub^{T}) \succeq 0.\]
Thus all conditions (i), (ii) and (iii) hold.\\
$[${\bf Sufficiency}$]$ If the optimality conditions hold, then from (ii) we see that
\[h(x)=x^{T} A x+{2a}^{T} x  + \lambda_{0}(\| x \|^{2}- 1) + (u^{T}x-u_{0})(b^{T}x - \beta)\]
is convex. Moreover, from condition (i), we have  $\nabla h(x^{*})=0$,
therefore, $x^{*}$ is a global minimizer of $h$. Thus for given
$\lambda_0$ and $(u_0;u)$ in the optimality conditions and for any
feasible solution of \eTRS, we have
 \begin{eqnarray} x^{T} A x+{2a}^{T} x \hspace{-.5cm}&& \geq x^{T} A x+{2a}^{T} x + \lambda_{0}(\| x \|^{2}- 1) + (u^{T}x-u_{0})(b^{T}x - \beta),\nonumber\\
&&\geq {x^{*}}^{T} A x^{*}+{2a}^{T} x^{*} + \lambda_{0}(\| x^{*} \|^{2}- 1) + (u^{T}x^{*}-u_{0})(b^{T}x^{*} - \beta)\nonumber\\
&& = {x^{*}}^{T} A x^{*}+{2a}^{T} x^{*}.\nonumber
\end{eqnarray}
This implies that $x^{*}$ is a global minimizer of \eTRS.
\end{proof}
\begin{theorem}
Suppose that there exists $\bar{x} \in \mathbb{R}^n$  with $\|\bar{x}\|^{2}- 1<0$ and $b^{T}\bar{x} -\beta<0$. Then,
we have
 \begin{eqnarray}\label{dual}
&&\min \Big \lbrace x^{T} A x+{2a}^{T} x : \| x \|^{2}\leq 1,
	 b^{T}x \leq \beta \Big \rbrace \nonumber \\&&=
	 \max_{\lambda_{0}\geq 0, (u_{0}, u) \in S } \min_{x} \Big
	 \lbrace x^{T} A x+{2a}^{T} x + \lambda_{0}(\| x \|^{2}- 1) + (u^{T}x-u_{0})(b^{T}x - \beta) \Big \rbrace,  \nonumber
\end{eqnarray}
where
$$ S=\Big \lbrace (u_{0};u) \Big |  (u^{T}x-u_{0})\geq 0,\quad  \forall x: \| x \|^{2} \leq 1 \Big \rbrace $$
and the maximum is attained.
\end{theorem}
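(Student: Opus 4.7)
The plan is to prove the two inequalities separately, with the ``$\geq$'' direction being a routine weak-duality argument and the ``$\leq$'' direction together with attainment following directly from Theorem \ref{thm1}.

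First I would observe that the set $S$ in the statement coincides with the Lorentz-cone condition $(-u_0;u)\in L^{n+1}$ of the corollary: the inequality $u^Tx - u_0 \geq 0$ for all $\|x\|^2\leq 1$ is equivalent, by minimizing the linear form over the unit ball, to $-u_0 \geq \|u\|$. This lets me move freely between the two descriptions of the multiplier set.

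For weak duality (``$\geq$''), I would fix any feasible $x$ for the \eTRS\ and any $(\lambda_0,u_0,u)$ with $\lambda_0\ge 0$ and $(u_0;u)\in S$. Feasibility gives $\|x\|^2-1\le 0$ and $b^Tx-\beta\le 0$, while membership in $S$ gives $u^Tx-u_0\ge 0$. Therefore both $\lambda_0(\|x\|^2-1)$ and $(u^Tx-u_0)(b^Tx-\beta)$ are nonpositive, so adding them to $x^TAx+2a^Tx$ only lowers the value. Taking first the infimum over $x\in\mathbb{R}^n$ and then the supremum over the multipliers yields the ``$\geq$'' inequality.

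For ``$\leq$'' and attainment, I would invoke Theorem \ref{thm1}: the feasible region is compact and nonempty, so a global minimizer $x^*$ exists, and by Theorem \ref{thm1} there are multipliers $\lambda_0^*\ge 0$ and $(-u_0^*;u^*)\in L^{n+1}$, hence $(u_0^*,u^*)\in S$, satisfying (i)--(iii). Condition (iii) says the Hessian of
\[
h(x) = x^TAx+2a^Tx + \lambda_0^*(\|x\|^2-1) + (u^{*T}x-u_0^*)(b^Tx-\beta)
\]
is positive semidefinite, so $h$ is convex; condition (i) is exactly $\nabla h(x^*)=0$, so $x^*$ is an unconstrained global minimizer of $h$; and the two complementary-slackness equalities in (ii) give $h(x^*)={x^*}^TAx^*+2a^{T}x^*$. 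Consequently the inner minimum at $(\lambda_0^*,u_0^*,u^*)$ equals the primal optimal value, which combined with weak duality yields equality and attainment at these multipliers.

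The only real step to watch is the identification of $S$ with the Lorentz-cone condition and the verification that $(u^Tx^*-u_0^*)\ge 0$ combined with $b^Tx^*-\beta\le 0$ forces the product in (ii) to vanish; everything else is a direct recycling of the optimality conditions already established in Theorem \ref{thm1}.
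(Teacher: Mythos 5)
Your proposal is correct and follows essentially the same route as the paper: weak duality from the sign conditions on the two product terms, and the reverse inequality plus attainment by plugging the multipliers from Theorem \ref{thm1} into the Lagrangian, using convexity of $h$, $\nabla h(x^*)=0$, and complementary slackness. Your explicit identification of $S$ with the condition $\|u\|\le -u_0$ (i.e.\ $(-u_0;u)\in L^{n+1}$) is a small clarification the paper leaves implicit, but it does not change the argument.
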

\begin{proof}
It is easy to see that, for every feasible point of \eTRS, and every $\lambda_{0}\geq 0$ and $(u_0,u) \in S$,
\[x^{T} A x+{2a}^{T} x \geq x^{T} A x+{2a}^{T} x  + \lambda_{0}(\| x \|^{2}- 1) + (u^{T}x-u_{0})(b^{T}x - \beta).\]
Therefore
 \begin{eqnarray}
&&\min \Big \lbrace x^{T} A x+{2a}^{T} x : \| x \|^{2}\leq 1,
	 b^{T}x \leq \beta \Big \rbrace \nonumber \\&&\geq
	 \max_{\lambda_{0}\geq 0, (u_{0}, u) \in S} \min_{x} \Big
	 \lbrace x^{T} A x+{2a}^{T} x + \lambda_{0}(\| x \|^{2}- 1) + (u^{T}x-u_{0})(b^{T}x - \beta) \Big \rbrace. \nonumber
\end{eqnarray}
To show the reverse inequality, let $x^{*}$ be a global minimizer of
\eTRS, then there exists $\lambda_{0} \in \mathbb{R}_{+}$ and $(u_{0}, u) \in S$ such that the following condition holds:
\begin{itemize}
\item $(2A+2\lambda_{0}I + bu^{T}+ ub^{T})x^{*} = -(2a - \beta u -bu_{0}),$
\item  $\lambda_{0}(\|x^{*}\|^{2}- 1) = 0$ and $ (u^{T}x^{*}-u_{0})(b^{T}x^{*}- \beta)=0,$
\item $(2A+2\lambda_{0}I + bu^{T}+ ub^{T}) \succeq 0.$
\end{itemize}
We see that
\[h(x)=x^{T} A x+{2a}^{T} x  + \lambda_{0}(\| x \|^{2}- 1) + (u^{T}x-u_{0})(b^{T}x - \beta),\]
is convex, $\nabla h(x^{*})=0$ and $\nabla^{2} h(x^{*})\succeq 0$. Therefore, $x^{*}$ is a global minimizer of $h$ i.e.,
 \begin{eqnarray}  &&  x^{T} A x+{2a}^{T} x + \lambda_{0}(\| x \|^{2}-
\delta^2) + (u^{T}x-u_{0})(b^{T}x - \beta)\nonumber \\&& \geq {x^{*}}^{T}
Ax^{*}+{2a}^{T} x^{*} + \lambda_{0}(\| x^{*} \|^{2}- 1) + (u^{T}x^{*}-u_{0})(b^{T}x^{*} - \beta)\nonumber\\
&& = {x^{*}}^{T} A x^{*}+{2a}^{T} x^{*}.\nonumber
\end{eqnarray}
Therefore
\begin{eqnarray}
&&\min \Big \lbrace x^{T} A x+{2a}^{T} x : \| x \|^{2}\leq 1,
	b^{T}x \leq \beta \Big \rbrace \nonumber \\&&\leq
	\max_{\lambda_{0}\geq 0,\ (u_{0}; u) \in S} \min_{x} \Big \lbrace
	x^{T} A x+{2a}^{T} x + \lambda_{0}(\| x \|^{2}- 1) + (u^{T}x-u_{0})(b^{T}x - \beta) \Big \rbrace. \nonumber
\end{eqnarray}\end{proof}
As we see, in general  strong duality does not hold for \eTRS  which is the reason to rank conditions given  in \cite{bec,hs,jl} to guarantee it.
\begin{corollary}
If $u=0$ and $u_0\neq 0$, then strong duality holds for \eTRS.
\end{corollary}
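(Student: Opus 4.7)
The plan is to invoke Theorem 2.2, which asserts that the primal optimum of \eTRSp equals
\[
\max_{\lambda_{0}\geq 0,\ (u_{0}; u)\in S}\ \min_{x\in\mathbb{R}^{n}} h(x;\lambda_{0},u_{0},u),
\]
where
\[
h(x;\lambda_{0},u_{0},u)=x^{T}Ax+2a^{T}x+\lambda_{0}(\|x\|^{2}-1)+(u^{T}x-u_{0})(b^{T}x-\beta),
\]
and that the outer maximum is attained. Denote a maximizer by $(\lambda_{0}^{*},u_{0}^{*},u^{*})$; by hypothesis $u^{*}=0$ and $u_{0}^{*}\neq 0$.

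First I would unwind the set $S$. A pair $(u_{0};u)$ lies in $S$ iff $u^{T}x-u_{0}\ge 0$ for every $x$ in the unit ball; minimising the left-hand side over that ball gives the condition $-\|u\|-u_{0}\ge 0$, so $S=\{(u_{0};u):\|u\|\le -u_{0}\}$. Substituting $u^{*}=0$ forces $u_{0}^{*}\le 0$, and combining with $u_{0}^{*}\neq 0$ we obtain $u_{0}^{*}<0$. Therefore $\mu^{*}:=-u_{0}^{*}>0$ is a legitimate nonnegative Lagrange multiplier.

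Next I would substitute $u^{*}=0$ into $h$: the bilinear cross term collapses to the linear one $(-u_{0}^{*})(b^{T}x-\beta)=\mu^{*}(b^{T}x-\beta)$, giving
\[
h(x;\lambda_{0}^{*},u_{0}^{*},0)=x^{T}Ax+2a^{T}x+\lambda_{0}^{*}(\|x\|^{2}-1)+\mu^{*}(b^{T}x-\beta),
\]
which is exactly the classical Lagrangian of \eTRSp associated with multipliers $\lambda_{0}^{*},\mu^{*}\ge 0$ for the ball constraint and the linear inequality, respectively.

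Finally, I would close the argument by a weak/strong duality sandwich. Weak duality gives that the classical Lagrangian dual value $d^{*}:=\sup_{\lambda,\mu\ge 0}\inf_{x}[x^{T}Ax+2a^{T}x+\lambda(\|x\|^{2}-1)+\mu(b^{T}x-\beta)]$ is at most the primal optimum $p^{*}$; on the other hand, the particular feasible dual pair $(\lambda_{0}^{*},\mu^{*})$ produces an inner value equal to $\min_{x}h(x;\lambda_{0}^{*},u_{0}^{*},0)$, which by Theorem 2.2 coincides with $p^{*}$. Hence $d^{*}=p^{*}$, i.e., strong duality holds. The only subtle point, and the main piece of bookkeeping, is the sign convention on $u_{0}$: once the cone condition forces $u_{0}^{*}<0$, the identification $\mu^{*}=-u_{0}^{*}$ makes the reduction of the augmented Lagrangian to the classical one immediate.
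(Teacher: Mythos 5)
Your proof is correct and is exactly the argument the paper leaves implicit: the paper's entire proof is ``It follows from the previous theorem,'' and your reduction---identifying $S=\{(u_{0};u):\|u\|\le -u_{0}\}$ so that $u^{*}=0$ and $u_{0}^{*}\neq 0$ force $\mu^{*}:=-u_{0}^{*}>0$, whereupon the augmented Lagrangian of Theorem 2.2 collapses to the classical Lagrangian and weak duality closes the sandwich---supplies precisely the omitted details. A minor observation: your own argument shows the hypothesis $u_{0}\neq 0$ is not actually needed, since $u=0$ with $u_{0}^{*}=0$ still reduces $h$ to the classical Lagrangian with the admissible multiplier $\mu^{*}=0$.
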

\begin{proof}
It follows from the previous theorem.
\end{proof}

Form Theorem 2.2, we further have
\begin{eqnarray}\label{main}&&\max_{\lambda_{0}\geq 0, (u_{0}; u) \in S}
\min_{x} \Big \lbrace x^{T} A x+{2a}^{T} x + \lambda_{0}(\| x \|^{2}- 1) + (u^{T}x-u_{0})(b^{T}x - \beta) \Big \rbrace\nonumber\\ &&  =\max z \nonumber\\
&& \left( \begin{array}{cc}  -\lambda_0 + \beta u_0 - z  &\ \  \displaystyle \frac{1}{2}\left( 2a-\beta u - bu_0 \right)^T  \\ \displaystyle  \frac{1}{2}\left( 2a-\beta u - bu_0 \right) &~~ A+\lambda_0  I + \frac{1}{2}\left( bu^T + ub^T \right)  \end{array}\right) \succeq 0,\\
&& \Vert u \Vert \leq -u_{0},\nonumber\\
&& \lambda_{0}\geq 0,\nonumber
\end{eqnarray}
which is an SOCP/SDP formulation for \eTRS. In what follows, we show that this formulation is the dual of the SOCP/SDP formulation given  in \cite{ba,sz}.
Consider the Lagrange function of (\ref{main}):
\begin{align*}
\mathcal{L}(Y,v,u,u_0,\lambda_0 ,z)&=z +
\begin{pmatrix}
-\lambda_0 + \beta u_0 - z & \frac{1}{2}\left( 2a-\beta u - bu_0 \right)^T \\
\frac{1}{2}\left( 2a-\beta u - bu_0 \right) & A+\lambda_0  I + \frac{1}{2}\left( bu^T + ub^T \right)
\end{pmatrix}\bullet Y +v^T
\begin{pmatrix}-u_0 \\ u \end{pmatrix}
\end{align*}
where  $Y\succeq0$ and $||\bar{v}|| \leq v_0 $.
Let also
\begin{align*}
Y=\begin{pmatrix}
\alpha & x^T \\
x &X
\end{pmatrix}.
\end{align*}
 Thus the Lagrangian can be written as
\begin{align*}
\mathcal{L}(Y,v,u,u_0,\lambda_0 ,z)&=z +
\left(A+\lambda_0  I + \frac{1}{2}\left( bu^T + ub^T \right) \right) \bullet X +
\left(  2a-\beta u - bu_0 \right)^Tx \\
& ~~~~~~+\alpha \left(-\lambda_0 + \beta u_0 - z\right)
+\bar{v}^Tu - v_0 u_0 \\
&=A\bullet X + 2a^Tx +(1 - \alpha)z +\lambda_0 \left(\mathrm{trace}(X) - \alpha \right) + (Xb   -\beta x+ \bar{v})^Tu\\
&~~~~~~~~~~~~+(-b^Tx - v_0 + \beta ) u_0.
\end{align*}
Therefore, the Lagrangian dual becomes
\begin{align*}
&\operatornamewithlimits{\min}_{\begin{pmatrix}\alpha & x^T \\ x & X \end{pmatrix} \succeq 0,~||\bar{v}|| \leq v_0} ~~ \max_{\lambda_0 \geq 0,~ ||u|| \leq -u_0 } ~~\mathcal{L}(Y,v,u,u_0,\lambda_0 ,z)\\
&=\operatornamewithlimits{\min}_{\begin{pmatrix}\alpha & x^T \\ x & X \end{pmatrix} \succeq 0,~||\bar{v}|| \leq v_0} ~~ \max_{\lambda_0 \geq 0,~ ||u|| \leq -u_0 }A\bullet X + 2a^Tx + (1 - \alpha)z +\lambda_0 \left(\mathrm{trace}(X) - \alpha \right) \\
&\hspace{7.4cm} + (Xb   -\beta x  + \bar{v})^Tu+(-b^Tx - v_0 + \beta ) u_0 \\
&=\operatornamewithlimits{\min}_{\begin{pmatrix}\alpha & x^T \\ x & X \end{pmatrix} \succeq 0,~||\bar{v}|| \leq v_0} ~~\mathcal{G}(X,x,\alpha),
\end{align*}
where
\begin{align*}
\hspace{-2cm}&\mathcal{G}(X,x,\alpha)= \max_{\lambda_0 \geq 0,~ ||u|| \leq -u_0 }A\bullet X + 2a^Tx + (1 - \alpha)z +\lambda_0 \left( \mathrm{trace}(X) - \alpha \right)\\
&\hspace{4.9cm}+ (X b -\beta x + \bar{v})^Tu+(-b^Tx - v_0 +\alpha \beta ) u_0.
\end{align*}
We further have
\begin{align*}
\mathcal{G}(X,x,\alpha) =
\begin{cases}
A\bullet X + 2a^Tx, & \mbox{if }~1-\alpha=0 ,~ \mathrm{trace}(X) - \alpha \leq 0,~  \\
                         &~~~~Xb -\beta x   + \bar{v} =0,~-b^Tx - v_0 + \beta\geq 0\\
\infty, & \mbox{o.w}
\end{cases}
\end{align*}
Thus Lagrangian dual becomes
\begin{align}
\min ~~ &A\bullet X + 2a^Tx\nonumber \\
            &  \mathrm{trace}(X) - 1\leq 0,\nonumber \\
            &Xb -\beta x + \bar{v} =0,\nonumber \\
            &-b^Tx - v_0 + \beta\geq 0,\label{f2}\\
            &||\bar{v}|| \leq v_0,\nonumber \\
            &X \succeq xx^T.\nonumber
\end{align}
From (\ref{f2}) we have:
\begin{align*}
&\bar{v}=  \beta x - Xb,  \\
&v_0 \leq -b^Tx   + \beta, \\
&||\bar{v}|| \leq v_0 \Longrightarrow ||\bar{v}|| = || \beta x- Xb   || \leq v_0 \leq -b^Tx   + \beta.
\end{align*}
Therefore (\ref{f2}) can be written as follows:
\begin{align}\label{dsocp}
\min ~~ &A\bullet X + 2a^Tx, \nonumber \\
             &  \mathrm{trace}(X)  \leq 1, \nonumber   \\
             &|| \beta x-Xb  || \leq -b^Tx   + \beta,  \\
             &X \succeq xx^T. \nonumber
\end{align}
This SOCP/SDP formulation is exactly the one given in \cite{ba, sz} but our derivation is completely different.
\begin{corollary}
If at the optimal solution of (\ref{dsocp}), $X^*=x_{{\tiny socp/sdp}}^*(x_{{\tiny socp/sdp}}^*)^T$, then $x^*_{{\tiny socp/sdp}}$ is optimal for (\ref{etrs1}).
\end{corollary}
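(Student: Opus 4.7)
The plan is to argue that $(\ref{dsocp})$ is a convex relaxation of $(\ref{etrs1})$ whose value is always a lower bound on the eTRS optimum, and then to verify that under the rank-one condition $X^{*}=x^{*}_{\rm socp/sdp}(x^{*}_{\rm socp/sdp})^{T}$ the point $x^{*}_{\rm socp/sdp}$ is both feasible for $(\ref{etrs1})$ and attains that lower bound.

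First I would check that $(\ref{dsocp})$ is indeed a relaxation of $(\ref{etrs1})$. Given any $x$ feasible for $(\ref{etrs1})$, set $X=xx^{T}$; then $X\succeq xx^{T}$, $\mathrm{trace}(X)=\|x\|^{2}\le 1$, and $\|\beta x-Xb\|=\|x\|\,|\beta-b^{T}x|=\|x\|(\beta-b^{T}x)\le \beta-b^{T}x$, since $b^{T}x\le\beta$ and $\|x\|\le 1$. The objectives agree, so the optimal value of $(\ref{dsocp})$ is no larger than the optimal value of $(\ref{etrs1})$.

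Next I would use the rank-one hypothesis to show feasibility of $x^{*}_{\rm socp/sdp}$ for $(\ref{etrs1})$. With $X^{*}=x^{*}_{\rm socp/sdp}(x^{*}_{\rm socp/sdp})^{T}$, the trace constraint of $(\ref{dsocp})$ becomes $\|x^{*}_{\rm socp/sdp}\|^{2}\le 1$. For the linear constraint, the SOCP inequality $\|\beta x^{*}_{\rm socp/sdp}-X^{*}b\|\le \beta-b^{T}x^{*}_{\rm socp/sdp}$ forces its right-hand side to be nonnegative, which immediately gives $b^{T}x^{*}_{\rm socp/sdp}\le\beta$. Hence $x^{*}_{\rm socp/sdp}$ is feasible for $(\ref{etrs1})$.

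Finally, under the rank-one assumption the objective of $(\ref{dsocp})$ at $(X^{*},x^{*}_{\rm socp/sdp})$ equals $A\bullet x^{*}_{\rm socp/sdp}(x^{*}_{\rm socp/sdp})^{T}+2a^{T}x^{*}_{\rm socp/sdp}=(x^{*}_{\rm socp/sdp})^{T}Ax^{*}_{\rm socp/sdp}+2a^{T}x^{*}_{\rm socp/sdp}$, which is exactly the eTRS objective at $x^{*}_{\rm socp/sdp}$. Since this value coincides with the optimum of $(\ref{dsocp})$ and thus is a lower bound on the eTRS minimum, and since $x^{*}_{\rm socp/sdp}$ is feasible and achieves it, $x^{*}_{\rm socp/sdp}$ is globally optimal for $(\ref{etrs1})$. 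There is no real obstacle here: the only subtle point is extracting the sign condition $\beta-b^{T}x^{*}_{\rm socp/sdp}\ge 0$ from the SOCP constraint, which follows automatically because the right-hand side of a second-order cone inequality must be nonnegative.
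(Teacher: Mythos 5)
Your proof is correct and complete. The paper states this corollary without any proof at all, so there is nothing to compare it against directly; implicitly the authors are relying on the chain of identities they have built (eTRS $=$ the max--min $=$ (\ref{main}), whose Lagrangian dual is (\ref{dsocp})), which requires the Slater condition of Theorem 2.2. Your argument is more elementary and more self-contained: you only use that (\ref{dsocp}) is a relaxation of (\ref{etrs1}) (weak direction, obtained by substituting $X=xx^{T}$ for any eTRS-feasible $x$), that the rank-one hypothesis makes $x^{*}_{\rm socp/sdp}$ feasible for (\ref{etrs1}), and that the two objectives then coincide. The one step that needed care --- extracting $b^{T}x^{*}_{\rm socp/sdp}\le\beta$ from the second-order cone constraint via nonnegativity of its right-hand side --- you handle explicitly and correctly. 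A side benefit of your route is that it proves the corollary without invoking the strict feasibility assumption, so it holds whenever (\ref{dsocp}) has a rank-one optimal solution.
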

\subsection{Rank one decomposition  procedure}
In order to derive an optimal solution for \eTRS from the none-rank one solution of (\ref{dsocp}), here we give a rank one decomposition approach similar to the one in  \cite{yz}.  Let $X^*$ be an optimal solution for (\ref{dsocp}) which is not rank one and consider the following notations:
\begin{align*}
&Y^*=\begin{pmatrix}
1  ~&\left( x_{{\tiny socp/sdp}}^* \right)^T\\
x_{{\tiny socp/sdp}}^*  ~& X^*  \\
\end{pmatrix}\hspace{-0.15cm},~
J=\begin{pmatrix}
1  ~& 0\\
0  ~& -I_n \\
\end{pmatrix}\hspace{-0.15cm},~
g=\begin{pmatrix}
\beta  \\
-b   \\
\end{pmatrix}\hspace{-0.13cm}.
\end{align*}
Obviously we have
\begin{align*}
& || \beta x_{{\tiny socp/sdp}}^*-X^*b  || \leq -b^Tx_{{\tiny socp/sdp}}^*   + \beta ~~ \Longleftrightarrow ~
\begin{pmatrix}
\beta -b^Tx_{{\tiny socp/sdp}}^*   \\
\beta x_{{\tiny socp/sdp}}^*-X^*b   \\
\end{pmatrix} = Y^*g \in L_{n+1}, \\
&\mathrm{trace}(X^*) \leq 1 ~~ \Longleftrightarrow ~ J\bullet Y^* \geq 0.
\end{align*}
\begin{lemma}[\cite{yz}]\label{teo22}
Let $G$ be an arbitrary symmetric matrix and $X$ be a positive semidefinite matrix with rank $r$. Further suppose that $G\bullet X\geq 0$. Then there exists a rank-one decomposition for $X$ such that
$$ X=\sum_{i=1}^r x_i x_i^T     $$
and $x_i^T G x_i \geq 0$  for all $ i=1,\cdots, r$. If, in particular,  $G\bullet X= 0$, then  $x_i^T G x_i = 0$ for all  $ i=1,\cdots, r$.
\end{lemma}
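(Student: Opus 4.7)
The plan is to prove this by induction on the rank $r$ of $X$, constructing the rank-one pieces one at a time via a planar Jacobi-type rotation that forces at least one piece to contribute nonnegatively to the pairing with $G$ while keeping the residual matrix compatible with the inductive hypothesis. The base case $r=1$ is immediate: $X = x_1 x_1^T$ gives $x_1^T G x_1 = G \bullet X \geq 0$.

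For the inductive step I would start from the spectral decomposition $X = \sum_{i=1}^r y_i y_i^T$ with pairwise orthogonal (hence linearly independent) $y_i$'s, so that $\sum_i y_i^T G y_i = G \bullet X \geq 0$. If every $y_i^T G y_i$ is already nonnegative, take $x_1 := y_1$. Otherwise some $y_k^T G y_k < 0$, and since the total is nonnegative there is also some index $j$ with $y_j^T G y_j > 0$. I would replace the pair $(y_k,y_j)$ by
\[
\tilde y_k = \cos\theta\, y_k + \sin\theta\, y_j, \qquad \tilde y_j = -\sin\theta\, y_k + \cos\theta\, y_j,
\]
which preserves $y_k y_k^T + y_j y_j^T$ and therefore the whole decomposition of $X$. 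The map $\theta \mapsto \tilde y_k^T G \tilde y_k$ is continuous, negative at $\theta = 0$ and positive at $\theta = \pi/2$, so by the intermediate value theorem some $\theta^\star$ produces $\tilde y_k^T G \tilde y_k = 0$; set $x_1 := \tilde y_k$. Then $X' := X - x_1 x_1^T$ is a sum of $r-1$ outer products from linearly independent vectors (the planar rotation is invertible on $\mathrm{span}(y_k, y_j)$, so $\tilde y_j$ remains linearly independent from the untouched $y_i$'s), hence $X'$ is positive semidefinite of rank exactly $r-1$, and $G \bullet X' = G \bullet X - x_1^T G x_1 = G \bullet X \geq 0$. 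Applying the inductive hypothesis to $X'$ yields $X' = \sum_{i=2}^r x_i x_i^T$ with $x_i^T G x_i \geq 0$, and juxtaposing with $x_1$ gives the desired decomposition. The equality case is then automatic: if $G \bullet X = 0$, then $\sum_i x_i^T G x_i = 0$ with each summand nonnegative, forcing every $x_i^T G x_i = 0$.

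The main obstacle is precisely the selection of $x_1$: the induction requires a rank-one term whose quadratic form against $G$ is nonnegative \emph{and} whose removal leaves a positive semidefinite matrix of rank exactly $r-1$ on which the trace hypothesis $G \bullet X' \geq 0$ continues to hold. The two-dimensional rotation combined with the intermediate value theorem resolves this by driving $x_1^T G x_1$ all the way to $0$ in the hard case, so that the full ``trace budget'' $G \bullet X$ is passed intact to the recursive call; this is the classical Sturm--Zhang maneuver and is what makes the scheme terminate correctly after exactly $r$ steps.
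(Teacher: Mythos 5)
The paper gives no proof of this lemma at all---it is imported verbatim from Ye and Zhang via the citation---so there is nothing in the text to compare against; your argument is, however, correct and is precisely the classical Sturm--Zhang/Ye--Zhang rank-one decomposition procedure (induction on the rank, planar rotation of a negative--positive pair, intermediate value theorem to zero out one quadratic form so the full ``budget'' $G\bullet X$ passes to the residual of rank exactly $r-1$). The one case you treat slightly loosely, where all $y_i^TGy_i\ge 0$ already, needs no recursion since the spectral decomposition itself is then the desired one, but your version still works because $G\bullet X'=\sum_{i\ge 2}y_i^TGy_i\ge 0$ there as well.
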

The following case may occur:

{\bf Case 1.} $Y^*g = 0$. From Lemma \ref{teo22}, there exists a rank one decomposition
for $Y^*$ as follows:
\begin{align*}
Y^* = \sum_{i=1}^r (y_i^*)(y_i^*)^T,
\end{align*}
where $r$ is the rank of $Y^*$ such  $J\bullet \left[ (y_i^*)(y_i^*)^T \right]\geq 0$ for all $i=1, \ldots, r$. Morover,
$J\bullet \left[ (y_i^*)(y_i^*)^T \right]= 0$ for all $i=1, \ldots, r$  if  $J\bullet Y^*=0$. We may choose the sign of the $y_i^*$
to ensure that $y_i^* \in L_{n+1}$, $i=1, \ldots, r$.

By linear independence of $y_i^*$'s, we get $g^T y_i^* =0$, $i=1, \ldots, r$. Let
$y_i^*=\begin{pmatrix} t_i^*   \\ \bar{y}_i^*  \\ \end{pmatrix}$, $i=1, \ldots, r$. Since $y_i^* \in L_{n+1}$ and  $y_i^*\neq 0$, we have $t_i^* >0$, $i=1, \ldots, r$. Take any $1\leq j\leq r$; it follows that
$\begin{pmatrix}  1  \\ \bar{y}_i^*/ t_i^* \\ \end{pmatrix} \left(1 ~~ [\bar{y}_i^*/ t_i^*]^T   \right)$ is optimal for (\ref{dsocp}).

{\bf Case 2.}  $J\bullet Y^*>0$  and  $Y^*g\neq 0$. Due to the complementarity condition, we must have $\lambda_0 =0$. Let
$y^*_g :=Y^*g= \begin{pmatrix}  t_g^*  \\ \bar{y}_g^* \\ \end{pmatrix}$. Since $y_g^* \in L_{n+1}$ by feasibility, we know that $t_g^*>0$. Moreover, $J\bullet \left[y_g^* (y_g^*)^T  \right] = (t_g^*)^2 - || \bar{y}_g^* ||^2\geq 0$ , and
$y_g^* (y_g^*)^Tg = \left( g^T Y^* a \right)Y^* a \in L_{n+1}$. Therefore, $y_g^* (y_g^*)^T / (t^*_g)^2$ is optimal for (\ref{dsocp})  as it is feasible and satisfies the complementarity conditions.

{\bf Case 3.}  $J\bullet Y^*=0$  and  $Y^*g\neq 0$. Denote  $y^*_g :=Y^*g\neq 0$. Let $\tilde{Y}= Y^* - \frac{Y^* gg^TY^*}{g^TY^*g}\succeq 0$. It is easy to see that $\tilde{Y}g=0$.

Case 3.1. $J \bullet \left[y^*_g (y^*_g)^T \right] = 0$. In this subcase, we have that  $y^*_g (y^*_g)^T/(t^*_g)^2$ is optimal for (\ref{dsocp}).

Case 3.2. $J \bullet \left[y^*_g (y^*_g)^T \right] >0 $. In this subcase,
\begin{align}\label{fr28}
J \bullet \tilde{Y} = J \bullet Y^* - J \bullet \left[y^*_g (y^*_g)^T \right]/g^TY^* g <0.
\end{align}
Now let us decompose  $ \tilde{Y}$ as
\begin{align*}
 \tilde{Y} = \sum_{i=1}^s \tilde{y}_i \tilde{y}_i^T,
\end{align*}
where $s= \mathrm{rank}(\tilde{Y}) >0$.  Since $\tilde{Y}g=0$, we have $ \tilde{y}_i^Tg=0$,
for all $i = 1, \ldots , s$. Choose $j$ such that
\begin{align*}
 J \bullet \tilde{y}_j \left(\tilde{y}_j \right)^T <0.
\end{align*}
Such $j$ must exist due to (\ref{fr28}). Consider the following quadratic equation:
\begin{align*}
 J \bullet \left[\left( y^*_g  +\alpha \tilde{y}_j \right) \left( y^*_g  +\alpha \tilde{y}_j \right)^T \right]=0.
\end{align*}
This equation has two distinct real roots with opposite signs.
Choose  the one such that the first component of $y^*_g  +\alpha \tilde{y}_j$ is positive.
 Denote
\begin{align*}
y^*_g +\alpha \tilde{y}_j  :=\begin{pmatrix}  t^*  \\ \bar{y}^* \\ \end{pmatrix}\hspace{-0.12cm}.
\end{align*}
In this case, since $ J \bullet \left[ y^*_g \left( y^*_g  \right)^T \right]>0$ it follows that $y^*_g$   is in the strict interior of the
cone $L_{n+1}$. Due to the complementarity, we must have $( u_0^*; u^*) =0$. Let us consider the solution
$ \begin{pmatrix} 1  \\ \bar{y}^* /t^* \\ \end{pmatrix} \left(    1   ~~ (\bar{y}^* /t^*)^T \right)$. It is easy to check that  this solution is both
feasible and complementary to the dual optimal solution  $ \left( \lambda_0^*; u_0^*; u^*\right)$, thus optimal for (\ref{dsocp}).

\section{Numerical examples}
The aim of this section is to provide various examples explaining different cases that might occur for the optimal solution of \eTRS.
\hspace{-6mm}
\begin{example}
Consider the following example:
\begin{align*}
A=\begin{pmatrix}
-4 ~& 0 ~& 0\\
0  ~& 12 ~& 0\\
 0 ~& 0 ~&11\\
\end{pmatrix}\hspace{-0.15cm},~
a=\begin{pmatrix}
-4  \\
0   \\
 0  \\
\end{pmatrix}\hspace{-0.15cm},~
b=\begin{pmatrix}
20  \\
8   \\
 -14  \\
\end{pmatrix}\hspace{-0.15cm},~
\delta =1,~\beta = 5.
\end{align*}
We have $\lambda_1 = -4$ and
$\mathrm{dim} ~ \mathrm{Ker}\left( A- \lambda_{min}(A) I_n\right)=1 \ngeq  2$, thus the dimension condition (\ref{eq:rankcond})   fails to hold. Moreover, the new dimension condition given in \cite{hs} also fails to hold, since
\begin{align*}
\mathrm{rank}\left( \left[ A-\lambda_1 I_n ~~~  b \right] \right)=
\begin{pmatrix}
0  ~& 0 ~& 0 &20 \\
0  ~& 8 ~& 0 &8\\
 0 ~& 0 ~& 7 &-14\\
\end{pmatrix}=3 \nleqslant 2.
\end{align*}
The optimal objective value of SDP relaxation (\ref{sdpr}) is $-7.6827$. The global solution of \TRS is $x_g^*=[1, 0, 0]^T$ which is infeasible for \eTRS and \LNGM of \TRS is ${x}_l^*=[ -1~, 0~,0]^T$ which is feasible for \eTRS with the objective value $4.0000$. Moreover, for (\ref{dsocp}), the optimal solution is
$x_{{\tiny socp/sdp}}^*=[0.6266 , -0.2169 ,   0.4140]^T$ and $X^*=x_{{\tiny socp/sdp}}^*(x_{{\tiny socp/sdp}}^*)^T$. Thus $x_{{\tiny socp/sdp}}^*$ is optimal for (\ref{etrs1}) with objective value  $-4.1329$. As we see, the classical SDP relaxation (\ref{sdpr}) is not exact for this example and subsequently strong duality fails to hold. Also it is worth to note that at the optimal solution,  the linear constraint is active while the trust region constraint is not active.

\end{example}
\begin{example}
Consider the following example:
\begin{align*}
A=\begin{pmatrix}
-4 ~& 0 ~& 0\\
0  ~& 5 ~& 0\\
 0 ~& 0 ~&3\\
\end{pmatrix}\hspace{-0.15cm},~
a=\begin{pmatrix}
0.5714  \\
0   \\
 0  \\
\end{pmatrix}\hspace{-0.15cm},~
b=\begin{pmatrix}
-17  \\
14   \\
 -2  \\
\end{pmatrix}\hspace{-0.15cm},~
\delta =1,~\beta = 4.4.
\end{align*}
We have $\lambda_1 = -4$ and
$\mathrm{dim} ~ \mathrm{Ker}\left( A- \lambda_{min}(A) I_n\right)=1 \ngeq  2$, thus the dimension condition (\ref{eq:rankcond})   fails to hold for this example as well. Also the new dimension condition \cite{hs} fails to hold here, since
\begin{align*}
\mathrm{rank}\left( \left[ A-\lambda_1 I_n ~~~  b \right] \right)=
\begin{pmatrix}
0  ~& 0 ~& 0 &-17 \\
0  ~& 5 ~& 0 &14\\
 0 ~& 0 ~& 3 &-2\\
\end{pmatrix}=3 \nleqslant 2.
\end{align*}
The global solution of \TRS is $x_g^*=[-1, 0, 0]^T$ which is infeasible for \eTRS and \LNGM of \TRS is $x_l^*=[ 1~, 0~,0]^T$ which is feasible for \eTRS with the objective value $-2.4972$. The optimal objective value of SDP relaxation (\ref{sdpr}) is $-5.4326$ and the optimal objective value of  SOCP/SDP formulation (\ref{dsocp}) is $-2.4972$ which is also the optimal objective value of (\ref{etrs1}). Moreover, for (\ref{dsocp}), the optimal solution is
${x_{{\tiny socp/sdp}}}^*=[ 1~, 0~,0]^T$ and $X^*=x_{{\tiny socp/sdp}}^*(x_{{\tiny socp/sdp}}^*)^T$ and thus $x_{{\tiny socp/sdp}}^*$ is optimal for (\ref{etrs1}). Here also strong duality fails to hold like the previous example. Finally, at the optimal solution,  the linear constraint is not active while the trust region constraint is   active.
\end{example}
\begin{example}\label{Exam3}
Consider the following example where at optimality both constraints are active:
\begin{align*}
A=\begin{pmatrix}
-4 ~& 0 ~& 0\\
0  ~& -8 ~& 0\\
 0 ~& 0 ~&2\\
\end{pmatrix}\hspace{-0.15cm},~
a=\begin{pmatrix}
0  \\
2.2857   \\
 0  \\
\end{pmatrix}\hspace{-0.15cm},~
b=\begin{pmatrix}
4 \\
-15  \\
18  \\
\end{pmatrix}\hspace{-0.15cm},~
\delta =1,~\beta = 4.
\end{align*}
Here we have $\lambda_1 = -8$ and
$\mathrm{dim} ~ \mathrm{Ker}\left( A- \lambda_{min}(A) I_n\right)=1 \ngeq  2$, thus the dimension condition (\ref{eq:rankcond})   fails to hold. Moreover, the new dimension condition of \cite{hs} also fails to hold, since
\begin{align*}
\mathrm{rank}\left( \left[ A-\lambda_1 I_n ~~~  b \right] \right)=
\begin{pmatrix}
4  ~& 0 ~& 0 &4 \\
0  ~& 0 ~& 0 &-15\\
 0 ~& 0 ~& 10 &18\\
\end{pmatrix}=3 \nleqslant 2.
\end{align*}
The global solution of \TRS is $x_g^*=[0, -1, 0]^T$ which is infeasible for \eTRS and \LNGM of \TRS is $x_l^*=[ 0~, 1~,0]^T$ which is feasible for \eTRS with the objective value $-3.4286$. The optimal objective value of SDP relaxation (\ref{sdpr})  is $-11.0642$ and the optimal objective value of  SOCP/SDP formulation (\ref{dsocp}) is $-9.7551$ which is also the optimal objective value of (\ref{etrs1}). The optimal solution of  (\ref{dsocp}) is
$x_{{\tiny socp/sdp}}^*=[-0.2885 ,  -0.8567 ,  -0.4276]^T$
and $X^*=x_{{\tiny socp/sdp}}^*(x_{{\tiny socp/sdp}}^*)^T$, thus $x_{{\tiny socp/sdp}}^*$ is optimal for (\ref{etrs1}).
\end{example}
In all three examples above, the optimal solution of (\ref{dsocp}) is rank one, thus we easily have the solution of (\ref{etrs1}). However, this is not the case in general as illustrated by the following example.
\begin{example}\label{Exam3}
Let
\begin{align*}
A=\begin{pmatrix}
-4 ~& 0 ~& 0\\
0  ~& 1 ~& 0\\
 0 ~& 0 ~&-3\\
\end{pmatrix}\hspace{-0.15cm},~
a=\begin{pmatrix}
0.5714  \\
0   \\
 0  \\
\end{pmatrix}\hspace{-0.15cm},~
b=\begin{pmatrix}
-6 \\
-3  \\
0  \\
\end{pmatrix}\hspace{-0.15cm},~
\delta =1,~\beta = 2.2.
\end{align*}
We have $\lambda_1 = -4$ and
$\mathrm{dim} ~ \mathrm{Ker}\left( A- \lambda_{min}(A) I_n\right)=1 \ngeq  2$, thus the dimension condition (\ref{eq:rankcond})   does not hold. Moreover, the new dimension condition \cite{hs} also fails to hold, since
\begin{align*}
\mathrm{rank}\left( \left[ A-\lambda_1 I_n ~~~  b \right] \right)=
\begin{pmatrix}
0  ~& 0 ~& 0 &-6 \\
0  ~& 5 ~& 0 &-3\\
 0 ~& 0 ~& 1 &0\\
\end{pmatrix}=3 \nleqslant 2.
\end{align*}
The global solution of \TRS is $\bar{x}^*=[-1~, 0~, 0]^T$ which is again infeasible for \eTRS and \LNGM of \TRS is $\bar{x}=[  1~,0~,0]^T$ which is feasible for \eTRS with the objective value $-2.8572$. The optimal objective value of SDP relaxation (\ref{sdpr})  is $-5.4354$ and the optimal objective value of  SOCP/SDP formulation (\ref{dsocp}) is $-3.6121$ which is also the optimal objective value of (\ref{etrs1}).
The optimal solution of (\ref{dsocp}) is
\begin{align*}
 X^*=\begin{pmatrix}
0.1842 ~& -0.0537 ~& 0\\
-0.053 ~& 0.0156  ~& 0\\
 0 ~       & 0 ~         &0.8001\\
\end{pmatrix}\hspace{-0.15cm},~~
 x_{{\tiny socp/sdp}}^*=\begin{pmatrix}
-0.4292 \\
0.1251 \\
 0 ~   \\
\end{pmatrix}\hspace{-0.15cm},~
\end{align*}
which obviously $X^*\neq x_{{\tiny socp/sdp}}^*(x_{{\tiny socp/sdp}}^*)^T$.
 By the  rank-one decomposition procedure discussed in the previous section, one gets the optimal solution  $x^*=[-0.4292 ,  0.1251 ,  -0.8945]^T$
 for (\ref{etrs1}). \end{example}

\section{Conclusions}In this paper, using a variant of S-Lemma,  we presented  the necessary and sufficient optimality conditions
for the extended trust region subproblem that led us to an SOCP/SDP reformulation  for it. Our derived formulation turned out to be the dual of the SOCP/SDP formulation given in \cite{ba,sz} but with a completely different approach. Extending this idea for several linear inequality constraints could be an interesting future research direction.

\end{document}